\documentclass[12pt]{amsart}
\usepackage{geometry, amsmath, amsthm, amssymb} 
\geometry{a4paper} 
\newtheorem{thm}{Theorem}[section]
\newtheorem{cor}[thm]{Corollary}
\newtheorem{lem}[thm]{Lemma}
\newtheorem{prop}{Proposition}

\usepackage{verbatim}
\title{Codazzi Tensors with Two Eigenvalue Functions}

\author{Gabe Merton}

\begin{document}
\maketitle

\begin{abstract}
This paper addresses a gap in the classifcation of Codazzi tensors  with exactly two eigenfunctions on a Riemannian manifold of dimension three or higher.   Derdzinski proved that if  the trace of such a tensor is constant and the dimension of one of the the eigenspaces is $n-1$,  then the metric is a warped product where the base is an open interval- a conclusion we will show to be true under a milder trace condition.  Furthermore, we construct examples of Codazzi tensors having two eigenvalue functions, one of which has  eigenspace dimension $n-1$, where the metric is not a warped product with interval base, refuting a remark in \cite{Besse} that the warped product conclusion holds without any restriction on the trace.
\end{abstract}

\section{Introduction}

A symmetric $(0,2)$ tensor $A$ is Codazzi if it satisfies the symmetry property$$(\nabla_XA)(Y,Z) = (\nabla_Y A)(X,Z)$$ for any vector fields $X$, $Y$ and $Z$.   Alternatively, a $(1,1)$ tensor $A$ is Codazzi if it is self-adoint and $$\left(\nabla_X A \right)Y = \left(\nabla_Y A \right)X$$  Throughout the paper, $V_\lambda$ denotes the eigendistribution corresponding to the eigenvalue function $\lambda$ of the tensor $A$.  That is, we say a vector field $Y$ is in $V_\lambda$ if $AY = \lambda Y$.  While it's conceptually more appropriate to view Codazzi tensors as $(1,1)$, computations are often easier when the tensor is viewed as $(0,2)$. 
\vspace{1pc}

There are many well-known examples of Codazzi tensors: any constant scalar multiple of the metric, and more generally any parallel self-adjoint $(1,1)$ tensor.  One may also ask what it means if certain well known self-adjoint $(1,1)$ tensors are Codazzi.  For example, the second fundamental form of a hypersurface embedded in a space of constant sectional curvature is Codazzi.  When the larger space is $\mathbb{R}^3$, this is the content of the famous Codazzi-Mainardi equation.  It's a standard exercise in Riemannian geometry to show that $Ric$ is Codazzi if and only if the divergence of the full curvature tensor vanishes, i.e. the curvature is \emph{harmonic}.  This is the case, for example, on Einstein manifolds.\\

There are other more subtle relationships between the behavior of Codazzi tensors and  the topology and geometry of the manifold.  Berger-Ebin proved in \cite{BergerEbin} that a constant trace Codazzi tensor on a compact manifold with non-negative sectional curvature must be parallel.  As a nice low-dimension result, Bourguignon showed in \cite{Bourg} that a compact orientable four-manifold admitting a non-trivial Codazzi tensor with constant trace must have signature zero.   Derdzinski-Shen proved in \cite{DerdShen} that if a Codazzi tensor on $M^n$ has $n$ distinct eigenvalues at all points of $M$, then all the Pontryagin classes of $M$  are zero.  

Another result relating the geometry of the manifold  to a Codazzi tensor's spectrum is the departure point of this paper.

\begin{thm}  (Derdzinski) \label{thm:der}
  Suppose $A$ is a Codazzi tensor on $M^n$, $n \ge 3$ having exactly two distinct eigenvalue functions $\lambda, \mu$ in a neighborhood of $p$ with dim $V_\mu \le$ dim $V_\lambda$.  Then, there exists a neighborhood of $p$ such that
  \begin{itemize}
  	\item[i.]  $M$ is a Riemannian product if and only if dim $V_\mu \ge 2$ or $A$ is parallel in a neighborhood of $p$.
	\item[ii.]  $M$ is a warped product with interval base and non-trivial warping function if and only if dim $V_\mu = 1$, $A$ has constant trace, and $A$ is not parallel.  In this case, $M = I \times_{F} N$ where $N$ and $I$ are the integral submanifolds of $V_\lambda$ and $V_\mu$, respectively.
  \end{itemize}
\end{thm}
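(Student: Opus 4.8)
The plan is to read off the local geometry of the two eigendistributions from the Codazzi condition and then feed that geometry into the standard recognition theorems (de Rham for products, Hiepko for warped products). First I would run the first-order Codazzi computations. Expanding $(\nabla_X A)(Y,Z) = X(A(Y,Z)) - A(\nabla_X Y, Z) - A(Y,\nabla_X Z)$ on eigenvectors and imposing the symmetry $(\nabla_X A)(Y,Z) = (\nabla_Y A)(X,Z)$ gives, for $X,Y \in V_\lambda$ and $Z \in V_\mu$, the relation $(\lambda-\mu)\,g(\nabla_X Y, Z) = (\lambda-\mu)\,g(\nabla_Y X, Z)$, hence $g([X,Y],Z)=0$; the mirror computation does the same for $V_\mu$, so both eigendistributions are integrable. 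The analogous bookkeeping with mixed arguments produces the key umbilicity identity $(\lambda-\mu)\,g(\nabla_X Y, Z) = (Z\lambda)\,g(X,Y)$ for $X,Y\in V_\lambda$, $Z\in V_\mu$, together with its $\lambda\leftrightarrow\mu$ mirror, so each leaf is totally umbilic with mean-curvature vector proportional to the normal part of $\nabla\lambda$ (resp.\ $\nabla\mu$). Finally, choosing all three arguments in $V_\lambda$ yields $(X\lambda)g(Y,Z)=(Y\lambda)g(X,Z)$, which forces $\nabla\lambda\perp V_\lambda$ as soon as $\dim V_\lambda \ge 2$.

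Because $\dim V_\mu \le \dim V_\lambda$ and $n\ge 3$, one always has $\dim V_\lambda \ge 2$, so $\nabla\lambda \in V_\mu$. For part (i), suppose $\dim V_\mu \ge 2$; then the mirror statement also gives $\nabla\mu \in V_\lambda$. The aim is to upgrade these crossed gradient conditions to $\nabla\lambda = \nabla\mu = 0$: once both eigenvalues are locally constant, both umbilicity vectors vanish, both foliations are totally geodesic, and de Rham's splitting theorem delivers a local Riemannian product. The converse directions are softer: if $A$ is parallel the eigendistributions are parallel and one again gets a product, while if $\dim V_\mu = 1$ with $A$ not parallel the $V_\mu$-curves turn out to be non-geodesic, which obstructs any product decomposition.

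For part (ii), assume $\dim V_\mu = 1$, $\operatorname{tr} A = p\lambda + \mu$ constant, and $A$ not parallel. Differentiating the trace gives $\nabla\mu = -p\,\nabla\lambda$, and since $\nabla\lambda\in V_\mu$ this puts $\nabla\mu\in V_\mu$ as well; the mirror umbilicity identity then shows the one-dimensional $V_\mu$-leaves are geodesics, i.e.\ $V_\mu$ is autoparallel. It remains to verify that the umbilic foliation $V_\lambda$ is \emph{spherical}. Writing $E$ for the unit field spanning $V_\mu$ and $\xi = (\lambda-\mu)^{-1}\nabla\lambda = sE$ with $s = (E\lambda)/(\lambda-\mu)$, one computes $(\nabla_X\xi)_{V_\mu} = (Xs)E$ for $X\in V_\lambda$, so sphericity reduces to $Xs=0$. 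Here the constant-trace hypothesis is decisive: it forces $X\mu = 0$ in addition to $X\lambda = 0$, so $\lambda$ and $\mu$ are constant along the $V_\lambda$-leaves; combined with the fact that $|E|=1$ and $V_\mu$ is autoparallel (which makes the $V_\mu$-component of $[X,E]$ vanish) this gives $X(E\lambda)=0$ and hence $Xs=0$. With $V_\mu$ autoparallel and $V_\lambda$ spherical, Hiepko's theorem produces a local warped product $I\times_F N$ whose base is the $V_\mu$-curve and whose fiber is the $V_\lambda$-leaf, the warping being nontrivial exactly when $A$ is not parallel. The reverse implication follows by computing $\nabla A$ for a two-eigenvalue Codazzi tensor on such a warped product and reading off that its trace is forced to be constant.

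I expect the delicate step to be the sphericity of $V_\lambda$ in part (ii), since this is precisely the point at which the constant-trace assumption enters: the first-order Codazzi relations alone only yield a \emph{twisted} product (both foliations umbilic), and it is the constancy of $\mu$ along $V_\lambda$ that promotes the umbilicity vector to a basic field and hence the twisted product to a genuine interval warped product. I anticipate a secondary difficulty in part (i), namely proving $\nabla\lambda = \nabla\mu = 0$ when $\dim V_\mu \ge 2$: the first-order identities above are mutually consistent without forcing the gradients to vanish, so this constancy should require a second application of the Codazzi equation (equivalently, the curvature/Ricci integrability identity for $A$) rather than the first-order relations alone.
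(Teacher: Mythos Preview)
The paper does not supply its own proof of Derdzinski's theorem; it is quoted as a known result from \cite{Derdz} and \cite{Besse}. What the paper \emph{does} prove is Theorem~\ref{thm:mainthm}, whose hypothesis~(1) specializes to the constant-trace case and hence covers the forward direction of part~(ii). Your route to that implication differs from the paper's: you verify that $V_\mu$ is autoparallel and $V_\lambda$ is spherical and then invoke Hiepko's warped-product recognition theorem, whereas the paper argues by hand in adapted coordinates---it exhibits the unit $V_\mu$-field as a gradient $\nabla t$ (this is the equivalence $(2)\Leftrightarrow(4)$ of Proposition~\ref{prop:char}), computes $\partial_t g_{ij} = 2\eta\, g_{ij}$ with $\eta=(\mu-\lambda)^{-1}\partial_t\lambda$, checks directly that $\partial_i\eta=0$, and then integrates to obtain the warped form. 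Your sphericity verification $Xs=0$ and the paper's verification $\partial_i\eta=0$ are literally the same computation (your $s$ is the paper's $\eta$), so the two arguments diverge only in packaging: you cite a structure theorem, the paper rederives the relevant instance of it from scratch. Part~(i) is not treated in the paper at all, so there is nothing to compare your sketch against; your flag that the constancy of $\lambda$ and $\mu$ when $\dim V_\mu\ge 2$ requires more than the first-order Codazzi identities is accurate.

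One genuine error: your claimed reverse implication for~(ii) does not hold. On a consistent interval warped product $I\times_F N$, a two-eigenvalue Codazzi tensor with $A\partial_t=\mu\,\partial_t$ and $A|_{TN}=\lambda\,\mathrm{id}$ is forced by the Codazzi equations to satisfy $\partial_i\mu=0$ and $\partial_t\lambda=(\mu-\lambda)F'/F$, but $\partial_t\mu$ is entirely unconstrained; the trace $\mu+(n-1)\lambda$ therefore need not be constant. The paper's Theorem~\ref{thm:mainthm} makes exactly this point: the consistent warped-product conclusion already follows from the strictly weaker hypothesis $D_Y(\mathrm{tr}\,A)=0$ for $Y\in V_\lambda$, so full constancy of the trace cannot be a necessary condition for that conclusion.
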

   It's not immediately obvious that such integral manifolds exists, however, a well known property of Codazzi tensors that we'll review in Section \ref{sec:background} is the fact that their eigendistributions are integrable.\\
  
	Theorem \ref{thm:der} was first proved by Derdzinski in \cite{Derdz} and is reported in Besse's \emph{Einstein Manifolds}, \cite{Besse}.  Besse precedes the proof with the statement "A similar argument works without the hypothesis [that trace $A$ is constant]."  This statement can be interpreted in two ways.  First some terminology.   If $A$ is a Codazzi tensor with exactly two distinct eigenfunctions $\mu$ and $\lambda$ and if $M = M_1 \times_F M_2$ is a warped product, we will say that the warped product and eigenspace structures are \emph{consistent} if $M_1$ and $M_2$ are integral submanifolds of the eigendistributions $V_\mu$ and $V_\lambda$.  In Theorem \ref{thm:der}, the structures are consistent.   Besse's remark could be saying that without the constant trace assumption,
	\begin{itemize}
		\item[i.]  $M$ is either a product or a warped product with interval base; or\\ 
		\item[ii.]  $M$ is either a product or a warped product with interval base and the warping variable is the coordinate of the interval.
		\end{itemize}



	In Section \ref{sec:counterexamples} , we prove the following propositions showing neither statement is true. 

\begin{prop}
\label{prop:notwarped}
There exists a compact Riemannian manifold $(M,g)$ and a Codazzi tensor $A$ with exactly two distinct eigenfunctions $\mu$ and $\lambda$ with dim $V_\mu = 1$ such that $M$ is neither a product nor a warped product.
\end{prop}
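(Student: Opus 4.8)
The plan is to exhibit the counterexample explicitly as a metric on a product manifold together with a Codazzi tensor whose simple eigenvalue is \emph{forced} to be non-constant along the leaves of $V_\lambda$ — precisely the behavior excluded by the interval-warped-product conclusion. Concretely, let $(N^{n-1},g_N)$ be any closed manifold with $n-1\ge 2$, let $t$ be the coordinate on $S^1=\mathbb{R}/\mathbb{Z}$, and fix a smooth positive function $\phi$ on $M=S^1\times N$, a constant $\lambda$, and a smooth nowhere-vanishing $c(t)$. Equip $M$ with $g=\phi^2\,dt^2+g_N$, set $\xi=\phi^{-1}\partial_t$ (a unit field spanning the line field $V_\mu$), and define the self-adjoint $(1,1)$ tensor $AY=\lambda Y+(\mu-\lambda)\langle Y,\xi\rangle\,\xi$, where $\mu=\lambda+c(t)/\phi$. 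Then $A$ has exactly the two eigenvalue functions $\mu$ (on $V_\mu$, dimension one) and $\lambda$ (on $V_\lambda=\xi^{\perp}=TN$, dimension $n-1$), with $\mu\ne\lambda$ everywhere since $c$ never vanishes, and $M$ is compact.

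To verify the Codazzi property I will check the three structural identities that together are equivalent to $A$ being a two-eigenvalue Codazzi tensor: (a) $\operatorname{grad}\lambda$ is proportional to $\xi$; (b) the leaves of $V_\lambda$ are umbilic with shape operator $\tfrac{\xi\lambda}{\mu-\lambda}\,\mathrm{Id}$; and (c) for every $X\perp\xi$ one has $X\mu=(\mu-\lambda)\langle X,\nabla_\xi\xi\rangle$. Identity (a) is automatic since $\lambda$ is constant. A direct Christoffel computation gives $\nabla_X\xi=0$ for $X\in TN$ — the slices $\{t\}\times N$ are totally geodesic because the fiber metric is $t$-independent — so (b) holds with both sides zero. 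The same computation yields $\nabla_\xi\xi=-\phi^{-1}\operatorname{grad}_N\phi$, and since $\langle X,\operatorname{grad}_N\phi\rangle=X\phi$ for $X\in TN$, the right side of (c) equals $-(\mu-\lambda)\phi^{-1}X\phi$; as $X\mu=X(c/\phi)=-c\,\phi^{-2}X\phi=-(\mu-\lambda)\phi^{-1}X\phi$, identity (c) holds by the very choice $\mu=\lambda+c/\phi$. Hence $A$ is Codazzi.

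Next I record why the example lies outside Theorem \ref{thm:der}. Its trace $\operatorname{tr}A=n\lambda+c(t)/\phi$ is non-constant as soon as $\phi$ or $c$ is, so the constant-trace hypothesis fails; more importantly, I will take $\phi$ genuinely depending on the $N$-variables, so that $\nabla_\xi\xi=-\phi^{-1}\operatorname{grad}_N\phi\ne 0$. Thus the integral curves of $V_\mu$ are not geodesics and $\mu$ is non-constant along the totally geodesic leaves of $V_\lambda$. This already rules out the structure of Theorem \ref{thm:der}(ii): in a warped product $I\times_F N$ consistent with the eigendistributions the base field spans $V_\mu$ and is geodesic, with $F,\mu,\lambda$ depending only on the interval coordinate; a consistent product, or a warped product with $N$ as base, would instead force $\phi$ to be independent of $t$.

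The delicate point — and the main obstacle — is to exclude warped-product structures $M=M_1\times_F M_2$ whose factor foliations are \emph{not} aligned with $V_\mu$ and $V_\lambda$, since a priori $M$ might split with respect to some unrelated pair of orthogonal foliations. My plan is to pin the decomposition down geometrically: in any warped product the base foliation is totally geodesic and the fiber foliation is spherical, so it suffices to show that the codimension-one totally geodesic foliation with mutually isometric leaves is essentially unique and must coincide with the $V_\lambda$-foliation, after which the non-geodesic behavior of $\xi$ and the $t$-dependence of $\phi$ yield a contradiction. I expect to secure this rigidity by choosing $(N,g_N)$ with trivial local isometries and generic curvature, so that no competing totally geodesic hypersurface foliation exists, and — for the strongest global statement — by optionally realizing $M$ as a mapping torus of $N$ by a nontrivial isometry, which is not diffeomorphic to any product and hence to no warped product. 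Establishing this uniqueness of the totally geodesic foliation, rather than the Codazzi verification, is where the real work lies.
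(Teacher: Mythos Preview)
Your Codazzi construction is correct and is, up to relabeling, exactly the template the paper uses: with $N=T^{2}$ flat and $c(t)\equiv -1$ your metric $\phi^{2}\,dt^{2}+g_{N}$ becomes the paper's $(\lambda-\mu)^{-2}\,dt^{2}+\lambda\,dx^{2}+\lambda\,dy^{2}$, and your verification via (a)--(c) is sound. The construction is not the issue.

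The genuine gap is the step you yourself flag as ``where the real work lies'': ruling out an interval warped product structure \emph{not} aligned with $V_{\mu},V_{\lambda}$. Your proposed mechanism does not close this gap. In an interval warped product $I\times_{F}N'$ the one-dimensional base is totally geodesic, but the codimension-one fibers are merely umbilic, not totally geodesic; so proving that the only codimension-one totally geodesic foliation is the $V_{\lambda}$-foliation does not preclude an unrelated umbilic hypersurface foliation serving as the fibers of some inconsistent warped splitting. Choosing $(N,g_{N})$ with ``trivial local isometries and generic curvature'' constrains totally geodesic hypersurfaces, not umbilic ones, so the rigidity you hope for targets the wrong object. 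The mapping-torus variant only obstructs global product diffeomorphism type, while the question (and the paper's treatment) is local: Derdzinski's theorem and Besse's remark concern a neighborhood of a point, and any point has a product chart.

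The paper avoids foliation-rigidity arguments entirely. It invokes Brinkmann's characterization --- a neighborhood is an interval warped product iff it carries a function $f$ with $\mathrm{Hess}\,f=a\cdot g$ and $\nabla f\neq 0$ --- then writes the six scalar Hessian equations for the explicit metric and integrates them. The first three force $f$ to be a $t$-dependent quadratic in $(x,y)$; feeding this into the mixed $(t,x)$ and $(t,y)$ equations yields a first-order PDE constraining $\mu$, and one obtains a short explicit list of admissible forms for $\mu(x,y)$. Choosing $\mu=\tfrac{1}{2}\sin x\cos y$ (periodic, so everything descends to $T^{3}$) lies outside the list, hence no nonconstant $f$ exists near the origin and the metric is locally not an interval warped product there. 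This PDE reduction is what you are missing; your geometric plan would have to be replaced by, or reduced to, an analysis of this Hessian system.
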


\begin{prop}\label{prop:inconsistent}
	There exists a Riemannian manifold $(M,g)$ and a Codazzi tensor $A$ with exactly two distinct eigenfunctions $\mu$ and $\lambda$ with dim $V_\mu = 1$ such that $M$ is a warped product with interval base but the warped product structure is inconsistent with the eigenspace structure.
\end{prop}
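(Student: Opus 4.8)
The plan is to produce a \emph{flat} example, exploiting the fact that a region of Euclidean space can be written as a warped product with interval base in a way that has nothing to do with any given parallel tensor. Let $M = \mathbb{R}^n \setminus \{0\}$ with its standard flat metric $g_0$, and let $A$ be the constant symmetric $(0,2)$ tensor $A = \lambda\, g_0 + (\mu - \lambda)\, dx_1 \otimes dx_1$ for fixed constants $\mu \ne \lambda$. As a $(1,1)$ tensor this acts as $\mu\,\mathrm{Id}$ on the line $\mathbb{R}e_1$ and as $\lambda\,\mathrm{Id}$ on the orthogonal complement $e_1^{\perp}$. Because $A$ has constant coefficients in Cartesian coordinates it is parallel, hence trivially Codazzi; its eigenvalue functions are the distinct constants $\mu,\lambda$, and $\dim V_\mu = 1$ with $V_\mu = \mathbb{R}e_1$ and $V_\lambda = e_1^{\perp}$ at every point. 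This takes care of all the tensor-side hypotheses at once.

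Next I would exhibit the interval warped product structure. Passing to polar coordinates expresses $(M,g_0)$ as $g_0 = dr^2 + r^2\, g_{S^{n-1}}$, a genuine warped product over the interval base $I = (0,\infty)$ with non-constant warping function $F(r) = r$ and fiber the round sphere $(S^{n-1}, g_{S^{n-1}})$. Thus $M$ is a warped product with interval base, whose intrinsic base and fiber foliations are, respectively, the radial rays (integral curves of $\partial_r$) and the centered spheres $\{r\} \times S^{n-1}$ (the level sets of $r$).

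Finally I would check that this structure is \emph{inconsistent} with the eigenspace structure. By the definition of consistency given in the introduction, one would need the one-dimensional base $I$ to be an integral submanifold of $V_\mu$ and the fiber to be an integral submanifold of $V_\lambda$. But the integral submanifolds of $V_\mu = \mathbb{R}e_1$ are the straight lines parallel to $e_1$, whereas the base factors are radial rays through the origin; equivalently, the integral submanifolds of $V_\lambda = e_1^{\perp}$ are the affine hyperplanes $\{x_1 = c\}$, which are never the round spheres $\{|x| = r\}$. Hence the warped-product foliations and the eigendistributions do not agree. Note this does not contradict Theorem \ref{thm:der}(i): since $A$ is parallel, $M$ also carries the \emph{consistent} product splitting $\mathbb{R}e_1 \times e_1^{\perp}$, and the whole point of the example is that the \emph{same} $(M,g,A)$ simultaneously admits an inconsistent interval warped product structure.

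The only genuinely delicate point is the inconsistency verification: one must argue that no reparametrization of the warped product makes the factors agree with the eigendistributions. This is clean because the warped metric $dr^2 + r^2 g_{S^{n-1}}$ determines its fiber foliation intrinsically as the level sets of the distance-to-origin function and its base foliation as the orthogonal geodesics, so the comparison of integral submanifolds above is unambiguous. If one prefers an example in which the two sets of directions nowhere coincide, it suffices to restrict $M$ to an open solid cone $\{\, x : x/|x| \in U \,\}$ disjoint from the $x_1$-axis, for $U \subset S^{n-1}$ open; this is still a warped product $(0,\infty) \times_r U$ over an interval base, and the same computation then shows $V_\mu$ is transverse to the radial foliation at every point, so the structures remain inconsistent.
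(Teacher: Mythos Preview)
Your proposal is correct as a proof of the proposition as literally stated, but it proceeds quite differently from the paper and yields a weaker example. The paper stays within its Section~\ref{sec:counterexamples} template: it takes $\lambda = 1$ constant and $\mu(x,y) = 1 + y/x^2$, so that $A$ is a genuinely \emph{non-parallel} Codazzi tensor; it then observes that the resulting metric $g = (x^4/y^2)\,dt^2 + dx^2 + dy^2$ cannot be written as $dt^2 + F(t)(dx^2+dy^2)$ (so no consistent interval warped product exists), and rewrites $g$ in polar coordinates on the $(x,y)$-plane as $dr^2 + r^2\bigl((\cos^4\theta/\sin^2\theta)\,dt^2 + d\theta^2\bigr)$, exhibiting an interval warped product in the $r$ direction that is visibly inconsistent with the eigenspaces. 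Your approach instead uses a \emph{parallel} tensor on flat space, for which Theorem~\ref{thm:der}(i) already guarantees a consistent Riemannian product $\mathbb{R}e_1 \times e_1^{\perp}$; you then note that punctured Euclidean space separately carries the polar warped product. This is more elementary and needs essentially no computation, but it lands in precisely the case already handled by Derdzi\'nski's theorem and therefore does not refute Besse's remark: interpretation~(ii) in the introduction is satisfied by your $(M,g,A)$, since $M$ \emph{is} a consistent product. The paper's example is sharper in that \emph{no} consistent product or warped-product structure exists there (because $A$ is not parallel and $\lambda$ is constant), which is what is really needed to show Besse's claim fails.
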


However, the conclusion of Theorem \ref{thm:der} does hold under weaker conditions than the trace being constant.  In Section \ref{sec:mainthm} we prove,\\

\begin{thm}\label{thm:mainthm}  Let $A$ be a Codazzi tensor on a manifold $M^n$, $n\ge 3$.  Suppose there exists a neighborhood of a point $p$ where $A$ has two distinct eigenfunctions $\mu$ and $\lambda$, dim $V_\mu = 1$, and $\lambda$ is not constant. Assume that any one of the following conditions hold:
\begin{itemize}
	\item[(1)]  $D_Y\left( tr(A)\right) = 0$ for all $Y \in V_\lambda$.
	\item[(2)] $D_Y\mu = 0$ for all $Y \in V_\lambda$.
	\item[(3)] The integral curves of $V_\mu$ are geodesics.
	\item[(4)]  If for every unit vector $X\in V_\mu$ there exists a function $f \in C^{\infty}(M)$ such that locally $\nabla f = X$
\end{itemize}
then the metric is a non-trivial warped product with interval base consistent with the eigenspace structure of $A$.
\end{thm}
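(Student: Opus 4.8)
The plan is to distill the Codazzi equation into a few pointwise relations between the two eigendistributions, to show that each of the four hypotheses is equivalent to (or forces) the single condition $\nabla_\xi\xi=0$, where $\xi$ is a local unit section of the line field $V_\mu$, and finally to check that this condition upgrades the totally umbilic foliation $V_\lambda$ to a \emph{spherical} one, which is exactly what a local warped-product criterion needs. First I would fix $\xi$ spanning $V_\mu$, so that $V_\lambda=\xi^\perp$ has dimension $n-1\ge 2$. Feeding two $V_\lambda$-vectors into the $(1,1)$ Codazzi identity and splitting into eigencomponents yields both $[Y,Z]\in V_\lambda$ (integrability, as reviewed in Section \ref{sec:background}) and $(Y\lambda)Z=(Z\lambda)Y$; since $\dim V_\lambda\ge 2$ this forces $Y\lambda=0$ for $Y\in V_\lambda$, i.e. $\nabla\lambda=(\xi\lambda)\,\xi\in V_\mu$. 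Feeding one $V_\mu$-vector and one $V_\lambda$-vector into the identity and again projecting gives the two relations I rely on:
\[
\nabla_Y\xi=\phi\,Y,\qquad \phi:=\frac{\xi\lambda}{\mu-\lambda},\qquad\qquad (\nabla\mu)^{V_\lambda}=(\mu-\lambda)\,\nabla_\xi\xi .
\]
The first says the leaves of $V_\lambda$ are totally umbilic with factor $\phi$; the second pins $\nabla_\xi\xi$ (which lies in $V_\lambda$ because $\xi$ is unit) to the tangential gradient of $\mu$.

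Next I would dispatch the four hypotheses uniformly. Since $\mathrm{tr}\,A=(n-1)\lambda+\mu$ and $Y\lambda=0$ for $Y\in V_\lambda$, condition (1) reads $Y\mu=0$, i.e. $(\nabla\mu)^{V_\lambda}=0$, which by the second relation above is equivalent to $\nabla_\xi\xi=0$. Condition (2) is literally $(\nabla\mu)^{V_\lambda}=0$, hence the same; condition (3) says the $\xi$-curves are geodesics, i.e. $\nabla_\xi\xi$ (orthogonal to $\xi$) vanishes; and condition (4) gives $\xi=\nabla f$ with $|\nabla f|=1$, whence $\nabla_\xi\xi=\tfrac12\nabla|\nabla f|^2=0$. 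Thus all four hypotheses collapse to $\nabla_\xi\xi=0$, and this equality in turn returns $Y\mu=0$ for every $Y\in V_\lambda$.

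It then remains to promote umbilic to spherical, i.e. to prove $Y\phi=0$ for $Y\in V_\lambda$. The lever is the symmetry of $\operatorname{Hess}\lambda$: using $\nabla\lambda=(\xi\lambda)\xi$ and $\nabla_Y\xi=\phi Y$ one gets $\operatorname{Hess}\lambda(Y,\xi)=Y(\xi\lambda)$, while $\operatorname{Hess}\lambda(\xi,Y)=(\xi\lambda)\,g(\nabla_\xi\xi,Y)=0$; symmetry forces $Y(\xi\lambda)=0$, and together with $Y\mu=Y\lambda=0$ this gives $Y\phi=0$. Now $V_\mu$ is integrable with totally geodesic leaves and $V_\lambda$ is integrable with spherical leaves, so a standard local warped-product characterization (Hiepko / Ponge--Reckziegel, cf. \cite{Besse}) produces an isometry with $I\times_F N$ in which $I$ and $N$ are the leaves of $V_\mu$ and $V_\lambda$ respectively, which is precisely the asserted consistency, with $F'/F=\phi$. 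Nontriviality is automatic: $\lambda$ nonconstant together with $\nabla\lambda\in V_\mu$ forces $\xi\lambda\not\equiv 0$, so $\phi\not\equiv 0$ and $F$ is nonconstant.

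The main obstacle is not any individual hypothesis but the step from \emph{totally umbilic} to \emph{spherical}: umbilic fibers over a one-dimensional base give only a twisted product, which need not be a warped product, so the Hessian-symmetry argument yielding $Y\phi=0$ is the genuine content, and it is exactly where the geodesic identity $\nabla_\xi\xi=0$ (equivalently, the controlled trace behavior) is consumed. A secondary point demanding care is the bookkeeping of the eigencomponent projections in the two Codazzi relations, and confirming that the inequality $\dim V_\lambda\ge 2$ (guaranteed by $n\ge 3$) is what licenses the crucial conclusion $\nabla\lambda\in V_\mu$.
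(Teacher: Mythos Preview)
Your argument is correct and reaches the same destination as the paper, but the route is packaged differently. The paper proceeds in two stages: first it proves the four hypotheses are equivalent (this is its Proposition~\ref{prop:char}, essentially your reduction to $\nabla_\xi\xi=0$), and then, using the gradient form $\xi=\nabla t$, it builds explicit coordinates $(t,x_1,\dots,x_{n-1})$ adapted to the eigendistributions, computes $\partial_t g_{ij}=2\eta\,g_{ij}$ with $\eta=(\mu-\lambda)^{-1}\partial_t\lambda$, shows $\partial_i\eta=0$ by commuting partials ($\partial_i\partial_t\lambda=\partial_t\partial_i\lambda=0$), and integrates to get $g_{ij}=e^{q(t)}h_{ij}$. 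You instead stay coordinate-free: you identify the second fundamental form of the $V_\lambda$-leaves as $\phi\,g$ (your $\phi$ is the paper's $\eta$), prove $Y\phi=0$ via the symmetry of $\mathrm{Hess}\,\lambda$, and then invoke the Hiepko/Ponge--Reckziegel characterization of warped products by spherical and totally geodesic complementary foliations. The paper's commuting-partials step and your Hessian-symmetry step are the same computation in different clothing; what genuinely differs is the endgame---the paper integrates explicitly, while you outsource the final synthesis to a known structure theorem. Your approach buys invariance and a clearer view of why the umbilic-to-spherical upgrade is the crux; the paper's buys self-containment, since no external warped-product criterion is needed.
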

 
In this theorem, the condition that $\lambda$ be non-constant replaces the condition that $A$ be non-parallel in the constant trace case and guarantees that the warping function is non-trivial. 



\section{Background}\label{sec:background}
	In this section we assemble the tools needed to prove the proposition and the theorem.  All the results in this section can be found in \cite{Derdz} or \cite{Besse} and seem to have appeared first in \cite{Reckziegel}, though the assumptions are slightly different.\footnote{The author wishes to thank Andrzej Derdzinski for bringing this reference to his attention.}  Lemma \ref{lem:same} gives a formula for the image of $\nabla_Y X$ under the Codazzi tensor $A$ given that $X$ and $Y$ are eigenvectors of the same eigenfunction.  

\begin{lem}\label{lem:same}  Suppose $A$ is a Codazzi tensor and that $X$ and $Y$ are two sections in $V_\lambda$.  Then, 
$$A\nabla_Y X = \lambda \nabla_Y X + (D_Y \lambda)X - g(X,Y)\nabla \lambda$$ 
\end{lem}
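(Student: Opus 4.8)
The plan is to differentiate the eigenvalue identity $AX = \lambda X$ in the direction $Y$ and then reduce the whole lemma to identifying a single term, $(\nabla_Y A)X$, which I claim equals $g(X,Y)\nabla\lambda$. First I would apply the product rule $\nabla_Y(AX) = (\nabla_Y A)X + A\nabla_Y X$ while also computing, directly from $AX=\lambda X$, that $\nabla_Y(AX) = \nabla_Y(\lambda X) = (D_Y\lambda)X + \lambda\nabla_Y X$. Equating these and solving for $A\nabla_Y X$ gives
$$A\nabla_Y X = \lambda\nabla_Y X + (D_Y\lambda)X - (\nabla_Y A)X,$$
so that the lemma becomes equivalent to the claim $(\nabla_Y A)X = g(X,Y)\nabla\lambda$.

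Establishing this claim is where the Codazzi hypothesis does its work, and it is the step I expect to be the main obstacle. I would test $(\nabla_Y A)X$ against an arbitrary vector field $Z$. Using the Codazzi symmetry $(\nabla_Y A)X = (\nabla_X A)Y$, the self-adjointness of $\nabla_X A$ (inherited from that of $A$, since $g$ is parallel), and a second application of Codazzi in the form $(\nabla_X A)Z = (\nabla_Z A)X$, I would rewrite
$$g\big((\nabla_Y A)X, Z\big) = g\big((\nabla_X A)Y, Z\big) = g\big(Y,(\nabla_X A)Z\big) = g\big((\nabla_Z A)X, Y\big).$$
The point of shifting the derivative onto $Z$ is that I may now expand $(\nabla_Z A)X = (D_Z\lambda)X + \lambda\nabla_Z X - A\nabla_Z X$ using the eigenvalue equation again. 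Pairing with $Y\in V_\lambda$ and invoking $g(A\nabla_Z X, Y) = g(\nabla_Z X, AY) = \lambda g(\nabla_Z X, Y)$, the two terms involving $\nabla_Z X$ cancel, leaving $g\big((\nabla_Z A)X, Y\big) = (D_Z\lambda)\,g(X,Y) = g(X,Y)\,g(\nabla\lambda, Z)$.

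Since $Z$ is arbitrary, this identifies $(\nabla_Y A)X = g(X,Y)\nabla\lambda$, and substituting back into the displayed formula for $A\nabla_Y X$ yields the statement. The only delicate points are the bookkeeping across the two uses of the Codazzi symmetry and the observation that $\nabla_X A$ is again self-adjoint; once those are in place the cancellation is forced. I would emphasize that the hypothesis that $X$ and $Y$ both lie in the \emph{same} eigendistribution $V_\lambda$ is used twice—once to differentiate $AX=\lambda X$ and once to evaluate $AY=\lambda Y$ inside the inner product—and that this is precisely what makes the $\nabla_Z X$ terms drop out.
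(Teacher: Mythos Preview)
Your proof is correct and follows essentially the same route as the paper's: both differentiate $AX=\lambda X$ via the Leibniz rule to isolate $(\nabla_Y A)X$, then use the Codazzi symmetry to shift the covariant derivative onto the test field $Z$ and expand using $X,Y\in V_\lambda$ to obtain $(D_Z\lambda)\,g(X,Y)$. The only cosmetic difference is that the paper moves from $(\nabla_Y A)(X,Z)$ to $(\nabla_Z A)(X,Y)$ in a single step (total symmetry of $\nabla A$ as a $(0,3)$-tensor), whereas you detour through $(\nabla_X A)Y$ and invoke self-adjointness of $\nabla_X A$ explicitly---the content is identical.
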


The next lemma shows that as long as the dimension of the eigendistribution $V_\lambda$ is at least two, the behavior of the eigenfunction $\lambda$ is severely restricted in the sense that its directional derivative is zero along any direction belonging to $V_\lambda$.  When we discuss the particular case of interest where there are only two eigenfunctions $\mu$ and $\lambda$ with dim $V_\mu = 1$ and dim $V_\lambda = n-1$, the lemma implies there's only one linearly independent direction in which $\lambda$ can vary.

\begin{lem}\label{lem:constant}  If $A$ is a Codazzi tensor and $V_\lambda$ is the eigendistribution of the eigenfunction $\lambda$, dim $V_\lambda \ge 2$, then $D_Y\lambda = 0$ for all $Y \in V_\lambda$.
\end{lem}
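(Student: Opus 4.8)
The plan is to exploit Lemma~\ref{lem:same} by choosing $X$ and $Y$ to be \emph{two distinct} sections of $V_\lambda$, which is possible precisely because $\dim V_\lambda \ge 2$. The key observation is that $A\nabla_Y X$ must decompose according to the eigendistribution, so the right-hand side of Lemma~\ref{lem:same} gives us an equation whose components we can read off. First I would take $X, Y \in V_\lambda$ to be orthonormal (or at least orthogonal), so that $g(X,Y) = 0$, which kills the last term and leaves
\begin{equation*}
A\nabla_Y X = \lambda \nabla_Y X + (D_Y\lambda)X.
\end{equation*}

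Next I would compare the components of this identity along the eigendistributions. Applying $A - \lambda\,\mathrm{Id}$ to $\nabla_Y X$, the formula says $(A-\lambda\,\mathrm{Id})\nabla_Y X = (D_Y\lambda)X$. The right-hand side $(D_Y\lambda)X$ lies in $V_\lambda$, which is exactly the kernel of $A - \lambda\,\mathrm{Id}$ restricted to $V_\lambda$; more precisely, $A - \lambda\,\mathrm{Id}$ annihilates the $V_\lambda$-component of any vector and acts invertibly on the complementary eigendistributions (since the other eigenvalues differ from $\lambda$). Hence the image of $A - \lambda\,\mathrm{Id}$ contains no nonzero vector of $V_\lambda$. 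Since $(D_Y\lambda)X \in V_\lambda$ and it equals something in the image of $A-\lambda\,\mathrm{Id}$ that has zero $V_\lambda$-component, we must have $(D_Y\lambda)X = 0$, and because $X \ne 0$ this forces $D_Y\lambda = 0$.

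The remaining point is to upgrade this to \emph{all} $Y \in V_\lambda$, not just those orthogonal to a chosen $X$. Here I would use that $\dim V_\lambda \ge 2$ gives enough room: for any nonzero $Y \in V_\lambda$ I can find a unit $X \in V_\lambda$ orthogonal to $Y$ (choosing $X$ in the orthogonal complement of $Y$ within $V_\lambda$, which is nonempty precisely because the dimension is at least two), and then the argument above yields $D_Y\lambda = 0$. Since $Y$ was arbitrary in $V_\lambda$, this completes the proof.

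I expect the main subtlety to be the clean extraction of the $V_\lambda$-component and justifying that the image of $A - \lambda\,\mathrm{Id}$ meets $V_\lambda$ only in $0$; this hinges on the pointwise spectral decomposition of the self-adjoint tensor $A$, under which distinct eigenspaces are orthogonal and $A-\lambda\,\mathrm{Id}$ is invertible on the sum of the non-$\lambda$ eigenspaces. Everything else is a direct specialization of Lemma~\ref{lem:same}, so the real work is packaging this linear-algebra step correctly rather than any hard computation.
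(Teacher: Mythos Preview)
Your proposal is correct and follows essentially the same route as the paper: pick a unit $X\in V_\lambda$ orthogonal to the given $Y$, apply Lemma~\ref{lem:same} so the $g(X,Y)\nabla\lambda$ term drops, and then read off $D_Y\lambda=0$. The only cosmetic difference is that the paper extracts the conclusion by pairing both sides with $X$ and using $g(A\nabla_Y X,X)=g(\nabla_Y X,AX)=\lambda\,g(\nabla_Y X,X)$, whereas you phrase the same linear-algebra step as ``the image of $A-\lambda\,\mathrm{Id}$ is orthogonal to $V_\lambda$''; these are equivalent uses of the self-adjointness of $A$.
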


Lemmas \ref{lem:same} and \ref{lem:constant}  are used to prove the eigendistributions of Codazzi tensors are integrable.
\begin{thm}\label{thm:integrable}  The eigendistributions of a Codazzi tensor $A$ are integrable.

\end{thm}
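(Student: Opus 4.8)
The plan is to show that each eigendistribution $V_\lambda$ is involutive, i.e., that for any two sections $X, Y \in V_\lambda$, the Lie bracket $[X,Y]$ again lies in $V_\lambda$. Since $[X,Y] = \nabla_X Y - \nabla_Y X$, it suffices to understand the components of $\nabla_Y X$ (and symmetrically $\nabla_X Y$) that lie outside $V_\lambda$, and then show these cancel in the bracket.

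The key tool is Lemma \ref{lem:same}, which controls $A\nabla_Y X$ for $X,Y \in V_\lambda$. The strategy is to decompose $\nabla_Y X$ into its component in $V_\lambda$ and its components in the other eigendistributions $V_\nu$ with $\nu \neq \lambda$, and to extract these off-diagonal components by pairing against eigenvectors. Concretely, for a vector field $W \in V_\nu$ with $\nu \neq \lambda$, I would compute $g(A\nabla_Y X, W)$ in two ways: on one hand, since $A$ is self-adjoint and $W$ is a $\nu$-eigenvector, $g(A\nabla_Y X, W) = \nu\, g(\nabla_Y X, W)$; on the other hand, Lemma \ref{lem:same} gives
\begin{equation*}
g(A\nabla_Y X, W) = \lambda\, g(\nabla_Y X, W) + (D_Y\lambda)\, g(X,W) - g(X,Y)\, g(\nabla\lambda, W).
\end{equation*}
Because $W \perp X$ (distinct eigenspaces of a self-adjoint tensor are orthogonal), the middle term $g(X,W)$ vanishes, leaving $(\nu - \lambda)\, g(\nabla_Y X, W) = -g(X,Y)\, g(\nabla\lambda, W)$. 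This isolates the off-diagonal component of $\nabla_Y X$ precisely.

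With this formula in hand, the computation of the bracket's off-diagonal component becomes symmetric: pairing $[X,Y] = \nabla_X Y - \nabla_Y X$ against $W$ gives
\begin{equation*}
(\nu - \lambda)\, g([X,Y], W) = -g(X,Y)\, g(\nabla\lambda, W) + g(Y,X)\, g(\nabla\lambda, W) = 0,
\end{equation*}
where the two contributions cancel by the symmetry of the metric, $g(X,Y) = g(Y,X)$. Since $\nu \neq \lambda$, this forces $g([X,Y], W) = 0$ for every eigenvector $W$ in a distribution other than $V_\lambda$, so $[X,Y]$ has no component outside $V_\lambda$ and hence $[X,Y] \in V_\lambda$. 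By the Frobenius theorem, $V_\lambda$ is integrable.

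The main subtlety, rather than obstacle, is bookkeeping of the eigenspace decomposition: one must ensure the argument runs uniformly over all eigenvalue functions $\nu \neq \lambda$, invoke the orthogonality of distinct eigenspaces to kill the $g(X,W)$ term, and handle the one-dimensional case $\dim V_\lambda = 1$ where involutivity is automatic. I would also note that Lemma \ref{lem:same} already presupposes $X, Y$ are both sections of $V_\lambda$, so the decisive cancellation is simply the interchange $X \leftrightarrow Y$ in the term $g(X,Y)\,g(\nabla\lambda,W)$, which is manifestly symmetric — this is what makes the bracket's transverse part vanish even though $\lambda$ itself may be non-constant.
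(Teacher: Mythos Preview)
Your argument is correct and follows the same blueprint as the paper's: apply Lemma~\ref{lem:same} to $\nabla_X Y$ and $\nabla_Y X$, then observe that the $g(X,Y)\nabla\lambda$ terms cancel in the bracket by symmetry of $g$. The paper computes $A[X,Y]$ directly as a vector, obtaining $A[X,Y] = \lambda[X,Y] + (D_X\lambda)Y - (D_Y\lambda)X$, and then invokes Lemma~\ref{lem:constant} (which requires $\dim V_\lambda \ge 2$) to kill the residual terms $(D_X\lambda)Y$ and $(D_Y\lambda)X$. By instead pairing against $W \in V_\nu$ with $\nu \ne \lambda$, you make the corresponding term $(D_Y\lambda)\,g(X,W)$ vanish via orthogonality of distinct eigenspaces rather than via Lemma~\ref{lem:constant}; this is a mild but genuine simplification, since your proof never needs to know that $D_Y\lambda = 0$ along $V_\lambda$. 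Both routes still handle the one-dimensional case trivially.
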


Note that by combining Theorem \ref{thm:integrable} and Lemma \ref{lem:constant}, one can say that if dim $V_\lambda \ge 2$, then $\lambda$ is constant along the leaves of $V_\lambda$.\\

The final technical lemma gives a formula for the directional derivative of an eigenfunction $\lambda$ when the direction, $Y$, belongs to a different eigendistribution.

\begin{lem}\label{lem:derivative}  If $A$ is a Codazzi tensor with $Y \in V_\lambda$ and $X,Z \in V_\mu$, then
	$$D_Y\mu \cdot g(X,Z) = (\lambda - \mu) g(\nabla_X Y,Z)$$
\end{lem}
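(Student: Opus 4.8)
The plan is to exploit the full symmetry of the Codazzi condition in its $(0,2)$ form, which makes $(\nabla_W A)(U,V)$ totally symmetric in $U,V,W$. In particular I would equate the two expansions of $(\nabla_X A)(Y,Z)$ and $(\nabla_Y A)(X,Z)$, choosing the differentiation directions so that one side produces the left-hand side of the claimed identity and the other produces the right-hand side.

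Before expanding, I record two elementary facts. First, because $A$ is self-adjoint and $\lambda \ne \mu$, the eigendistributions $V_\lambda$ and $V_\mu$ are orthogonal, so $g(Y,Z) = 0$; differentiating this relation along $X$ gives $g(\nabla_X Y, Z) = -g(Y, \nabla_X Z)$. Second, self-adjointness lets me move $A$ across the metric, e.g. $g(A\nabla_X Y, Z) = g(\nabla_X Y, AZ) = \mu\, g(\nabla_X Y, Z)$, which is the device that converts covariant derivatives of eigenvectors into scalar multiples.

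For the left-hand side I would expand $(\nabla_Y A)(X,Z) = Y(\mu\, g(X,Z)) - A(\nabla_Y X, Z) - A(X, \nabla_Y Z)$. Writing $Y(\mu\, g(X,Z)) = (D_Y\mu)\, g(X,Z) + \mu\, Y(g(X,Z))$ and applying self-adjointness to the last two terms (each contributing a factor $\mu$), the metric-compatibility term $\mu\, Y(g(X,Z))$ cancels exactly against $-A(\nabla_Y X, Z) - A(X, \nabla_Y Z)$, leaving $(\nabla_Y A)(X,Z) = (D_Y\mu)\, g(X,Z)$. For the right-hand side I would expand $(\nabla_X A)(Y,Z) = X(A(Y,Z)) - A(\nabla_X Y, Z) - A(Y,\nabla_X Z)$. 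Here $A(Y,Z) = \lambda\, g(Y,Z) = 0$, so the first term vanishes; self-adjointness turns $-A(\nabla_X Y,Z)$ into $-\mu\, g(\nabla_X Y, Z)$, while $-A(Y,\nabla_X Z) = -\lambda\, g(Y,\nabla_X Z) = \lambda\, g(\nabla_X Y, Z)$ after using the differentiated orthogonality relation. Collecting terms yields $(\nabla_X A)(Y,Z) = (\lambda-\mu)\, g(\nabla_X Y, Z)$.

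Equating the two expansions via the Codazzi identity $(\nabla_X A)(Y,Z) = (\nabla_Y A)(X,Z)$ then gives $(D_Y\mu)\, g(X,Z) = (\lambda-\mu)\, g(\nabla_X Y, Z)$, as desired. I expect no serious obstacle here: the computation is essentially a bookkeeping exercise, and the only points requiring care are to consistently track which eigenvalue ($\lambda$ or $\mu$) each vector carries through the Leibniz expansions and to invoke the differentiated relation $g(\nabla_X Y, Z) = -g(Y,\nabla_X Z)$ at the right moment. The essential structural input is simply the orthogonality of distinct eigendistributions together with the total symmetry of $\nabla A$.
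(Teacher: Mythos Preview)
Your proposal is correct and follows essentially the same route as the paper: compute $(\nabla_Y A)(X,Z)$ directly to obtain $(D_Y\mu)\,g(X,Z)$, compute $(\nabla_X A)(Y,Z)$ to obtain $(\lambda-\mu)\,g(\nabla_X Y,Z)$ using orthogonality of $V_\lambda$ and $V_\mu$, and then equate the two via the Codazzi identity. The only difference is expository---you spell out the self-adjointness and differentiated-orthogonality steps a bit more explicitly than the paper does.
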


\section{Removing the Constant Trace Assumption}\label{sec:mainthm}
We now turn to the proof of Theorem \ref{thm:mainthm}.  The proof consists of two steps.  In Proposition \ref{prop:char}, four conditions are shown to be equivalent.  We then use the proposition to show the existence of a warped product structure.  

It's straightforward to see that the trace of $A$, where $A$ possesses exactly two eigenfunctions $\mu$ and $\lambda$ with dim $V_\mu = 1$ and dim $V_\lambda = n-1$, is $$\textrm{trace }A = \mu + (n-1)\lambda.$$  If the trace is constant and $Y \in V_\lambda$, then by Lemma \ref{lem:constant}, $Y\mu = 0$.  That is, the constant trace assumption implies $Y\mu = 0$ for all $Y \in V_\lambda$.  Theorem \ref{thm:mainthm} shows this conclusion, that $Y\mu = 0$, is sufficient to obtain a warped product structure.
Alternatively, using the first characterization given in Proposition \ref{prop:char}, it's sufficient that the trace be constant in all directions except for one.\\

\begin{prop}\label{prop:char}
	Let $A$ be a Codazzi tensor on a manifold $M^n$, $n \ge 3$, with two distinct eigenfunctions $\mu$ and $\lambda$ on an open domain.  Assume dim $V_\mu =1$.  The following are equivalent.
	\begin{enumerate}
		\item $D_Y \left( tr(A) \right)= 0$ for all $Y \in V_\lambda$.
		\item $D_Y \mu = 0$ for all $Y \in V_\lambda$.
		\item The integral curves of $V_\mu$ are geodesics.
		\item If $X\in V_\mu$, $|X|=1$, then there exists a function $f \in C^{\infty}(M)$ such that locally $\nabla f = X$.
	\end{enumerate}
\end{prop}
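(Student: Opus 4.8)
The plan is to prove the four equivalences by first disposing of the easy link between (1) and (2), and then showing that (2), (3), and (4) are each equivalent to the vanishing of the $V_\lambda$-component of $\nabla_X X$, where $X$ is a local unit section spanning the one-dimensional distribution $V_\mu$.

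First, since $\dim V_\mu = 1$ and there are exactly two eigenfunctions, $\dim V_\lambda = n-1 \ge 2$, so Lemma \ref{lem:constant} applies and gives $D_Y\lambda = 0$ for every $Y \in V_\lambda$. Differentiating the trace formula $tr(A) = \mu + (n-1)\lambda$ in a direction $Y \in V_\lambda$ then yields $D_Y\left(tr(A)\right) = D_Y\mu$, so (1) and (2) are immediately equivalent.

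For (2) $\Leftrightarrow$ (3), I would fix a local unit section $X$ of $V_\mu$ and record that $g(\nabla_X X, X) = \tfrac12 D_X g(X,X) = 0$, so $\nabla_X X$ lies in $V_\lambda$; hence the integral curves of $V_\mu$ are geodesics precisely when $g(\nabla_X X, Y) = 0$ for all $Y \in V_\lambda$. Using metric compatibility together with $g(X,Y)=0$ gives $g(\nabla_X X, Y) = -g(\nabla_X Y, X)$, and applying Lemma \ref{lem:derivative} with $Z = X$ produces $g(\nabla_X Y, X) = D_Y\mu/(\lambda-\mu)$. Since $\lambda \neq \mu$, the vanishing of $\nabla_X X$ is therefore equivalent to $D_Y\mu = 0$ for all $Y\in V_\lambda$, which is condition (2).

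For the equivalence with (4), I would use the standard fact that a vector field $X$ is locally a gradient if and only if the $1$-form $g(X,\cdot)$ is closed, equivalently if and only if $\nabla X$ is symmetric as a $(0,2)$ tensor. To test this symmetry I would split both arguments into their $V_\mu$ and $V_\lambda$ components. The $V_\mu\times V_\mu$ block vanishes because $V_\mu$ is one-dimensional and $\nabla_X X \perp X$; the mixed block reduces exactly to the condition $g(\nabla_X X, Y)=0$, i.e. (3); and for $Y,Z\in V_\lambda$ one computes $g(\nabla_Y X, Z) - g(\nabla_Z X, Y) = -g(X,[Y,Z])$, which vanishes because $V_\lambda$ is integrable by Theorem \ref{thm:integrable} and $[Y,Z]\perp X$. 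Thus $\nabla X$ is symmetric if and only if (3) holds, closing the loop. The conceptual heart of the argument — and the only step requiring genuine care — is the middle equivalence: recognizing that the geodesic condition $\nabla_X X = 0$ is detected entirely through the $V_\lambda$-components of $\nabla_X X$, and that Lemma \ref{lem:derivative} converts precisely those components into the derivative $D_Y\mu$. Once that identification is secured, the gradient equivalence follows cleanly from the integrability of $V_\lambda$, since the off-diagonal block of $\nabla X$ ties back to the same quantity $g(\nabla_X X, Y)$.
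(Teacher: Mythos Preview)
Your proof is correct and follows essentially the same route as the paper's: both establish $(1)\Leftrightarrow(2)$ from the trace formula, $(2)\Leftrightarrow(3)$ via Lemma~\ref{lem:derivative}, and $(4)$ via the symmetry of $\nabla X$. The only cosmetic difference is in the $V_\lambda\times V_\lambda$ block of $\nabla X$: you deduce $g(\nabla_Y X,Z)=g(\nabla_Z X,Y)$ from the integrability of $V_\lambda$ (Theorem~\ref{thm:integrable}) via $g(X,[Y,Z])=0$, whereas the paper obtains it directly from the Codazzi identity $(\nabla_Y A)(X,Z)=(\nabla_Z A)(X,Y)$---equivalent one-line arguments.
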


\begin{proof}
	Throughout the proof assume $X$ is a unit vector in $V_\mu$ and $Y,Z \in V_\lambda$.\\
	\item[$(1) \Leftrightarrow (2).$]  This follows from the equation $$tr(A) = \mu + (n-1)\lambda$$ and the fact that $D_Y\lambda = 0$ for $Y \in V_\lambda$.\\
\item[$(2) \Leftrightarrow (3)$], $g(\nabla_X X, X) = 0$ since $|X|=1$. By Lemma \ref{lem:derivative}, 
$$D_Y\mu = (\mu - \lambda)g(\nabla_X X, Y)$$
Since $\mu \ne \lambda$, $\nabla_X X = 0$ if and only if $Y\mu = 0$.\\
\item[$(4) \Leftrightarrow (2)$.]  To show this implication, recall that a vector field $X$ is gradient if and only if $\nabla X$ is symmetric. We have,
	\begin{align*}
		(\nabla_Y A)(X,Z) & = - A(\nabla_Y X, Z) - A(X, \nabla_Y Z)\\
					    & = -\lambda g(\nabla_Y X, Z) - \mu g(X, \nabla_Y Z)\\
					    & = (\mu - \lambda)g(\nabla_Y X, Z)\\
	\end{align*}
	
	Similarly, $\displaystyle (\nabla_Z A)(X,Y ) = (\mu - \lambda)g(\nabla_Z X, Y)$.  By the Codazzi condition, $\displaystyle g(\nabla_Z X, Y) = g(\nabla_Y X, Z)$. Thus, as a $(0,2)$ tensor, $\nabla X$ is symmetric on $V_\lambda \times V_\lambda$ regardless of conditions (1) - (3).  We also have,
	
	\begin{align*}
		g(\nabla_X X, Y) & = (\mu -\lambda)^{-1} D_Y\mu\\
		g(\nabla_Y X, X) & = \frac{1}{2}Yg(X,X) = 0
	\end{align*}
This shows $\nabla X$ is symmetric if and only if $Y\mu = 0$.

\end{proof}
We now have everything we need to prove the main theorem.
\begin{proof}[Proof of Theorem \ref{thm:mainthm}]
	In general, the eigenbundles of a Codazzi tensor are integrable and orthogonal, so there exists a chart $\{U,r,y_1,...,y_{n-1}\}$ such that $\partial_r \in V_\mu$ and $\partial_i = \partial_{y_i} \in V_\lambda$.
	  Let $X\in V_{\mu}$ be a unit vector field.  By the fourth criterion in Proposition \ref{prop:char}, $X = \nabla t$ for some local submersion $t:U \to \mathbb{R}$.  Now if $Y\in V_{\lambda}$, then $Yt = g(X,Y) = 0$ meaning there exists coordinates $\{t, x_1,...,x_{n-1}\}$ such that $\partial_t \in V_{\mu}$, $\partial_{x_i} \in V_\lambda$ and $g(\partial_t, \partial_t) = 1$.  Moreover, $A(\partial_t, \partial_j) = g(\partial_t, \partial_j) = 0$ and $A(\partial_i, \partial_j) = \lambda(t)g_{ij}$.
	
	The next step is to prove that $\partial_t g_{ij} = f(t) g_{ij}$.  Lemma \ref{lem:derivative} implies,
	\begin{align*}
		\partial_t g(\partial_i, \partial_j) & = -2g(\nabla_{\partial_i} \partial_j, \partial_t)\\
		& = 2(\mu - \lambda)^{-1}(\partial_t \lambda)g_{ij} = 2 \eta g_{ij} \\
	\end{align*}
	where $\eta = (\mu - \lambda)^{-1}(\partial_t \lambda)$.  We can write,
	$$\eta g_{ij} = -g(\nabla_{\partial_i} \partial_j, \partial_t)  = \textrm{Hess\,t}(\partial_i, \partial_j)$$
	Now show that $\eta$ depends only on $t$.
	\begin{align*}
		\partial_i \eta & = -(\mu-\lambda)^{-2}\cdot(\partial_i \mu -\partial_i \lambda)(\partial_t \lambda) + (\mu - \lambda)^{-1}(\partial_i \partial_t \lambda)\\
		& =  (\mu - \lambda)^{-1}(\partial_i \partial_t \lambda)\\
		& = (\mu - \lambda)^{-1}(\partial_t\partial_i \lambda) = 0\\
	\end{align*}
	
	Since $\partial_t g_{ij} = \eta(t) g_{ij}$, integrate $\eta$ to obtain a function $q(t)$ such that $\partial_t(e^{-q}g_{ij}) = 0$.  This means $g_{ij} = e^{q(t)} h_{ij}$ for some $h_{ij}$.  This shows $M$ is a warped product. The warping function is trivial if and only if $\eta = 0$ which happens if and only if $\lambda$ is constant. 
\end{proof}

\section{Counterexamples}\label{sec:counterexamples}

This section presents a class of Codazzi tensors on open sets of $\mathbb{R}^3$  that provide the source of counterexamples for Propositions \ref{prop:notwarped} and \ref{prop:inconsistent}.

Let $\lambda > 0$ be a constant and $\mu(t,x,y)$ a $C^{\infty}$ function on an open set $V \subset \mathbb{R}^3$ and that there exists a connected open set $U \subset V$ where $\mu \ne \lambda$.  Define a metric and tensor on $U$ by
\begin{align*}
	g & = (\lambda - \mu(t,x,y))^{-2} \, dt^2  + \lambda \, dx^2 +\lambda \, dy^2\\
	A(\partial_t) & = \mu(t,x,y) \partial_t\\
	A(\partial_x) & = \lambda \partial_x\\
	A(\partial_y) & = \lambda \partial_y\\
\end{align*}
As a step toward proving that $A$ is indeed Codazzi, calculate the Christoffel symbols.  Throughout this section, the subscripts $i,j$ and $k$ shall refer to the variables $x$ and $y$.  For example, $\partial_i$ could mean either $\partial_x$ or $\partial_y$ but \emph{not} $\partial_t$.

\begin{lem}\label{lem:christoffels}
The non-trivial Christoffel symbols of this metric are,
\begin{align*}
	\Gamma_{tt}^t & = (\lambda-\mu)^{-1}(\partial_t \mu)\\
	\Gamma_{tt}^i & = -\frac{1}{\lambda}(\lambda - \mu)^{-3}(\partial_i \mu)\\
	\Gamma_{it}^t& = (\lambda - \mu)^{-1}(\partial_i \mu)
\end{align*}
\end{lem}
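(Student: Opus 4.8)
The plan is to compute the Christoffel symbols directly from the metric $g = (\lambda - \mu)^{-2}\,dt^2 + \lambda\,dx^2 + \lambda\,dy^2$ using the standard coordinate formula $\Gamma_{ab}^c = \tfrac{1}{2}g^{cd}(\partial_a g_{bd} + \partial_b g_{ad} - \partial_d g_{ab})$, and simply verify that the three listed symbols are the only nonvanishing ones. Since $\lambda$ is constant here (unlike in the main theorem), the metric components are $g_{tt} = (\lambda-\mu)^{-2}$, $g_{xx} = g_{yy} = \lambda$, and $g_{ij} = 0$ off-diagonal, so the inverse is immediate: $g^{tt} = (\lambda-\mu)^2$ and $g^{xx} = g^{yy} = 1/\lambda$. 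The first step I would take is to record which partial derivatives of the metric components are nonzero: only $g_{tt}$ depends on the coordinates, and it depends on all three through $\mu$, so $\partial_t g_{tt} = 2(\lambda-\mu)^{-3}(\partial_t\mu)$ and $\partial_i g_{tt} = 2(\lambda-\mu)^{-3}(\partial_i\mu)$, while every derivative of $g_{xx}, g_{yy}$ vanishes because $\lambda$ is constant.

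With these in hand, I would cycle through the symbols by the pattern of lower indices. For $\Gamma_{tt}^t$ only the $d=t$ term survives in $g^{cd}$, giving $\tfrac{1}{2}g^{tt}\partial_t g_{tt} = \tfrac{1}{2}(\lambda-\mu)^2 \cdot 2(\lambda-\mu)^{-3}(\partial_t\mu) = (\lambda-\mu)^{-1}(\partial_t\mu)$, matching the claim. For $\Gamma_{tt}^i$ the relevant inverse-metric entry is $g^{ii}=1/\lambda$, and the bracket reduces to $-\partial_i g_{tt}$ (the two $\partial_t g_{id}$ terms vanish), yielding $-\tfrac{1}{2}\cdot\tfrac{1}{\lambda}\cdot 2(\lambda-\mu)^{-3}(\partial_i\mu)$, which is the stated $-\tfrac{1}{\lambda}(\lambda-\mu)^{-3}(\partial_i\mu)$. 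For $\Gamma_{it}^t$ the surviving term is $\tfrac{1}{2}g^{tt}\partial_i g_{tt} = \tfrac{1}{2}(\lambda-\mu)^2\cdot 2(\lambda-\mu)^{-3}(\partial_i\mu) = (\lambda-\mu)^{-1}(\partial_i\mu)$, as claimed.

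Finally I would argue that all remaining symbols vanish. Any $\Gamma$ with upper index $x$ or $y$ and at least one spatial lower index forces a derivative of $g_{xx}$ or $g_{yy}$ (both zero) or an off-diagonal derivative, so these are zero; likewise $\Gamma_{ij}^t$ and $\Gamma_{ij}^k$ vanish because $g_{tt}$ carries no mixed dependence that couples two spatial lower indices, and $\Gamma_{it}^x$, $\Gamma_{it}^y$ vanish for the same reason. The only genuine bookkeeping task is confirming that the symmetric pair $\Gamma_{ti}^t = \Gamma_{it}^t$ is captured by the single listed entry. I expect no real obstacle here: the computation is entirely routine given the diagonal metric and constant $\lambda$, and the only mild care needed is tracking the power of $(\lambda-\mu)$ through the inverse metric and the chain rule on $\partial(\lambda-\mu)^{-2}$, so the main point is simply organizing the cases so that the vanishing of all unlisted symbols is transparent rather than checked one by one.
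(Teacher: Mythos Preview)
Your proposal is correct and follows essentially the same route as the paper: apply the standard coordinate formula for Christoffel symbols, use diagonality of $g$ to reduce to $g^{mm}$-terms, observe that only $g_{tt}$ has nonzero derivatives since $\lambda$ is constant, and then read off the three listed symbols while the rest vanish automatically. The computations you give match the paper's line by line, and your concluding case-check for the vanishing symbols is exactly the paper's observation that every remaining $\Gamma$ involves a derivative of a constant metric component or an off-diagonal term.
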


\begin{prop}\label{prop:codazzi}  The tensor $A$ defined above is Codazzi. \end{prop}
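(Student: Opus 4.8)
The plan is to verify the Codazzi condition by a direct computation in the coordinate frame $\{\partial_t,\partial_x,\partial_y\}$, using the Christoffel symbols recorded in Lemma \ref{lem:christoffels}. Since $A$ is diagonal with respect to this orthogonal frame it is automatically self-adjoint, so it suffices to show that the $(0,3)$ tensor $(\nabla_p A)_{qr} = \partial_p A_{qr} - \Gamma^s_{pq}A_{sr} - \Gamma^s_{pr}A_{sq}$ is symmetric under the interchange $p\leftrightarrow q$. Because this tensor is already symmetric in its last two slots, and because the condition is tensorial, I only need to test the equality $(\nabla_p A)_{qr} = (\nabla_q A)_{pr}$ on basis vectors with $p\neq q$.

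The key simplification is that the metric and $A$ depend on the coordinates only through $\mu$, that $\lambda$ and the spatial components $g_{ij}$ are constant, and that the only nonvanishing Christoffel symbols are $\Gamma^t_{tt}$, $\Gamma^i_{tt}$ and $\Gamma^t_{it}$. Consequently any component of $\nabla A$ carrying only spatial indices vanishes identically, and every case in which an off-diagonal entry $A_{ti}=0$ would merely be differentiated collapses to $0=0$. Running through the index patterns $(t,i,j)$, $(i,j,t)$ and $(i,j,k)$ one finds both sides equal to zero, so these contribute nothing.

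The only substantive identity is the pattern $(p,q,r)=(t,i,t)$, where both $(\nabla_t A)_{it}$ and $(\nabla_i A)_{tt}$ are genuinely nonzero and must be shown to agree. Substituting $A_{tt}=\mu(\lambda-\mu)^{-2}$, $A_{ii}=\lambda g_{ii}$ and the three Christoffel symbols from Lemma \ref{lem:christoffels}, I expect both expressions to reduce to
$$(\nabla_t A)_{it} = (\nabla_i A)_{tt} = (\lambda-\mu)^{-2}\,\partial_i\mu,$$
the left side after combining the $\Gamma^t_{ti}A_{tt}$ and $\Gamma^i_{tt}A_{ii}$ terms, and the right side after expanding $\partial_i\big(\mu(\lambda-\mu)^{-2}\big)$ and cancelling against $2\Gamma^t_{it}A_{tt}$.

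The main obstacle is purely bookkeeping in this last case: one must correctly track the powers of $(\lambda-\mu)^{-1}$ and use crucially that $\lambda$ is constant, so that $\partial_i(\lambda-\mu)=-\partial_i\mu$, in order for the two $2\mu(\lambda-\mu)^{-3}\partial_i\mu$ terms on the right to cancel, leaving exactly the factor $(\lambda-\mu)^{-2}\partial_i\mu$ produced on the left. Once this single equality is confirmed, together with the vanishing of all other cases the tensor $\nabla A$ is symmetric in its first two arguments, which is precisely the Codazzi property.
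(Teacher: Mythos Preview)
Your proposal is correct and follows essentially the same approach as the paper: a direct verification of the Codazzi identity on coordinate vectors using the Christoffel symbols of Lemma~\ref{lem:christoffels}, with the same case split into the vanishing patterns $(t,i,j)$, $(i,j,t)$, $(i,j,k)$ and the single nontrivial pattern $(t,i,t)$, where both sides equal $(\partial_i\mu)\,g_{tt}=(\lambda-\mu)^{-2}\partial_i\mu$. The only difference is cosmetic: you spell out the cancellation mechanism in the nontrivial case, while the paper simply records the three identities as ``straightforward calculations.''
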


\begin{proof}  It suffices to prove $\displaystyle (\nabla_X A)(Y,Z) = (\nabla_Y A)(X,Z)$ where $X$, $Y$ and $Z$ are all coordinate vectors. Again, let  $\partial_i$, $\partial_j$ and $\partial_k$ indicate partial derivatives with respect to $x$ or $y$.  Straightforward calculations show,
	\begin{itemize}
		\item[i.]$\displaystyle (\nabla_{\partial_t}A)(\partial_i, \partial_j) = (\nabla_{\partial_i}A)(\partial_t, \partial_j) = 0$
	\item[ii.]$\displaystyle (\nabla_{\partial_i}A)(\partial_t, \partial_t) = (\nabla_{\partial_t}A)(\partial_i, \partial_t) = (\partial_i \mu)g_{tt}$
	\item[iii.]$\displaystyle (\nabla_{\partial_i}A)(\partial_j, \partial_k)  = 0$
	\end{itemize}

\end{proof}

Our strategy for proving Propositions \ref{prop:notwarped} and \ref{prop:inconsistent} will be to judiciously select $\mu$ and $\lambda$ along with the following well known characterization of warped products with interval bases, first proved by Brinkmann in \cite{Brinkmann}.

\begin{lem} The following are equivalent
	\begin{itemize}
		\item[A.]  There exists a neighborhood $V$ of $p$ and a function $f$ such that $Hess f =a\cdot g$ for some function $a$ and $\nabla f(p) \ne 0$.
		\item[B.]  There exists a neighborhood $V$ of $p$ such that  $V$ is a warped product space $V = I \times_w F$ with 1-dimensional base $I$.
	\end{itemize}
\end{lem}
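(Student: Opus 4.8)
The two implications are of quite different character, so the plan is to treat them separately and spend most of the effort on $(A)\Rightarrow(B)$.

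For the elementary direction $(B)\Rightarrow(A)$, I would work with a function of the base variable alone. If $V = I\times_w F$ has metric $g = dt^2 + w(t)^2 g_F$, write $f = \phi(t)$. The standard warped-product connection formulas give $\nabla_{\partial_t}\partial_t = 0$ and $\nabla_X\partial_t = (w'/w)X$ for $X$ tangent to the fiber, whence $\operatorname{Hess} f(\partial_t,\partial_t) = \phi''$, the mixed terms vanish, and $\operatorname{Hess} f(X,Y) = \phi'(w'/w)g(X,Y)$ on the fiber. Choosing $\phi$ with $\phi' = w$ makes both diagonal blocks equal to $w'\,g$, so $\operatorname{Hess} f = w'\cdot g$ with $a = w'$, and $\nabla f = w\,\partial_t\neq 0$ since the warping function is nowhere zero. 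This settles $(A)$.

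For $(A)\Rightarrow(B)$ the idea is to manufacture warped-product coordinates from the gradient flow of $f$. Rewriting the hypothesis as $\nabla_X\nabla f = aX$, I would first compute $\nabla(|\nabla f|^2) = 2a\,\nabla f$, which shows $|\nabla f|^2$ is constant on the level sets of $f$, so $|\nabla f| = \psi(f)$ for a one-variable function $\psi$ and in particular $\nabla f\neq 0$ near $p$. Setting $\rho = |\nabla f|$ and $N = \nabla f/\rho$, a short computation gives $N\rho = a$ and then $\nabla_N N = 0$, so the normalized gradient has geodesic integral curves. Reparametrizing by arc length, i.e. taking $t = h(f)$ with $h' = 1/\psi$, produces a function with $\nabla t = N$ and $|\nabla t| = 1$ whose level sets coincide with those of $f$.

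With $N = \nabla t$ a unit gradient field, Gauss's lemma keeps the normal geodesics orthogonal to the level sets, so Fermi coordinates $(t,x_1,\dots,x_{n-1})$ adapted to the level set through $p$ put the metric in the form $g = dt^2 + g_{ij}(t,x)\,dx^i dx^j$ with no cross terms. It remains to show the fiber metric scales by a function of $t$ alone. Since $[\partial_t,\partial_i]=0$, I would compute $\partial_t g_{ij} = 2\operatorname{Hess} t(\partial_i,\partial_j)$, and using $t = h(f)$ with $\partial_i f = 0$ reduce this to $2h'(f)\,a\,g_{ij}$. The crucial fact is that $a$ is itself a function of $f$: from $a = N\rho = \psi'(f)\psi(f)$ it is constant on level sets. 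Hence $\partial_t g_{ij} = \Phi(t)g_{ij}$ for a one-variable function $\Phi$, and integrating in $t$ gives $g_{ij}(t,x) = w(t)^2 h_{ij}(x)$, exhibiting $V$ as $I\times_w F$.

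The main obstacle I anticipate is the $(A)\Rightarrow(B)$ direction, and within it two points deserve care: establishing rigorously that $N = \nabla t$, so that Gauss's lemma genuinely yields orthogonal coordinates, and verifying that $a$ depends only on $f$, so that the conformal factor $\Phi$ really is a function of $t$ alone. Everything else is bookkeeping with the connection formulas, but these two facts are precisely what force the rigid product-with-scaling structure rather than a mere foliation by level sets.
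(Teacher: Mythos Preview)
Your proof is correct, but you should be aware that the paper does \emph{not} prove this lemma at all: it quotes the result as a well-known characterization due to Brinkmann and cites it without argument. So there is no paper proof to compare against.

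That said, it is worth noting that your $(A)\Rightarrow(B)$ argument is essentially the same machinery the paper deploys in its proof of the main Theorem~\ref{thm:mainthm}: produce a function $t$ with unit gradient normal to the leaves, use Fermi-type coordinates to write $g = dt^2 + g_{ij}(t,x)\,dx^i dx^j$, compute $\partial_t g_{ij}$ as twice a Hessian, show the proportionality factor depends on $t$ alone, and integrate. In the paper that factor is $\eta = (\mu-\lambda)^{-1}\partial_t\lambda$ and the Codazzi equations force $\partial_i\eta = 0$; in your setting the factor is $h'(f)\,a$ and the key observation $a = \psi'(f)\psi(f)$ plays the same role. So although the paper does not prove the Brinkmann lemma, your proof of it is in the same spirit as the paper's central argument.

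Both steps you flagged as potential obstacles are handled correctly: the reparametrization $t=h(f)$ with $h'=1/\psi$ genuinely gives $\nabla t = N$ with $|\nabla t|=1$, and the identity $a=\psi'(f)\psi(f)$ (from $N\rho=a$ and $\rho=\psi(f)$) is exactly what makes $\Phi$ a function of $t$ alone. The $(B)\Rightarrow(A)$ direction via $\phi'=w$ is the standard computation and is fine.
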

Thus, to show a metric is not an interval warped product, it suffices to show if $f$ and $a$ satisfy $Hess f = a \cdot g$, then $\nabla f = 0$ in a neighborhood, i.e. $f$ is locally constant.
	
\begin{lem}\label{lem:hess}  The components of Hessf for the metric given above are,
	\begin{align*}
		Hessf(\partial_x, \partial_x) & = f_{xx}\\
		Hessf(\partial_y, \partial_y) & = f_{yy}\\
		Hessf(\partial_x,\partial_y) & = f_{xy}\\
		Hessf(\partial_t, \partial_t) & = f_{tt}  - (\lambda-\mu)^{-1}\mu_tf_t+\frac{1}{\lambda}\cdot (\lambda-\mu)^{-3}(f_x \mu_x +f_y \mu_y)\\
		Hessf(\partial_t, \partial_x) & = f_{tx} - (\lambda - \mu)^{-1}\mu_x f_t\\
		Hessf(\partial_t, \partial_y) & = f_{ty} - (\lambda- \mu)^{-1}\mu_y f_t\\
	\end{align*}
\end{lem}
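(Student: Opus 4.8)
The plan is to compute each component directly from the coordinate expression for the Hessian,
$$\mathrm{Hess}\,f(\partial_p, \partial_q) = \partial_p \partial_q f - \Gamma_{pq}^m \partial_m f,$$
feeding in the Christoffel symbols already recorded in Lemma \ref{lem:christoffels}. The only structural fact I need is that the sole nonvanishing symbols are $\Gamma_{tt}^t$, $\Gamma_{tt}^i$, and $\Gamma_{it}^t$; every symbol with fewer than two indices equal to $t$ vanishes. This immediately organizes the six components into three easy cases.

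First I would handle the purely spatial pairs. For $(\partial_i, \partial_j)$ with $i,j \in \{x,y\}$, none of the contributing symbols $\Gamma_{ij}^m$ is nonzero, so the correction term drops out entirely and $\mathrm{Hess}\,f(\partial_i, \partial_j) = \partial_i \partial_j f$. This produces the first three formulas $f_{xx}$, $f_{yy}$, $f_{xy}$ with no further work. Next I would treat the two mixed components $(\partial_t, \partial_i)$. Here the only surviving symbol is $\Gamma_{it}^t = (\lambda - \mu)^{-1}\mu_i$, so the correction is a single term and $\mathrm{Hess}\,f(\partial_t, \partial_i) = f_{ti} - (\lambda - \mu)^{-1}\mu_i f_t$, reproducing the fifth and sixth formulas.

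Finally, for the $(\partial_t, \partial_t)$ component all three families of symbols contribute: $\Gamma_{tt}^t$ produces the $-(\lambda - \mu)^{-1}\mu_t f_t$ term, while the two symbols $\Gamma_{tt}^i = -\frac{1}{\lambda}(\lambda - \mu)^{-3}\mu_i$ combine, after the sign flip from the minus in the Hessian formula, into $+\frac{1}{\lambda}(\lambda - \mu)^{-3}(f_x \mu_x + f_y \mu_y)$, yielding the fourth formula.

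Since every ingredient is already available, I expect no genuine obstacle; the lemma is a mechanical substitution. The only point demanding care is bookkeeping: correctly enumerating which $\Gamma_{pq}^m$ are nonzero for each fixed pair $(p,q)$, and tracking the overall minus sign so that the two spatial correction terms in the $(\partial_t, \partial_t)$ component end up with a positive coefficient.
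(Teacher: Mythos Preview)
Your proposal is correct and follows exactly the approach taken in the paper: the proof there simply invokes the coordinate formula $\mathrm{Hess}\,f(\partial_i,\partial_j) = \partial_i\partial_j f - \Gamma_{ij}^k \partial_k f$ together with the Christoffel symbols from Lemma~\ref{lem:christoffels}. Your write-up is in fact more detailed than the paper's one-line proof, but the content is identical.
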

\begin{proof}  Use  the formula, $\displaystyle Hessf(\partial_i, \partial_j)  = \partial_i \partial_j f-\Gamma_{ij}^k \partial_k f$ and the Christoffel symbols calculated above.
\end{proof}

From here on will study the particular case where $\lambda = 1$ and $\mu(t,x,y) = \mu(x,y)$.  Then by Lemma 4.2, if we are given a particular $\mu(x,y)$ we should look for functions $f(t,x,y)$ and $a(t,x,y)$ that solve the system below.  
	\begin{align*}
		f_{xx} & = a\\
		f_{yy} & = a\\
		f_{xy} & = 0\\
		f_{ty}(1-\mu) & = \mu_y f_t\\
		f_{tx}(1-\mu) & = \mu_x f_t\\
		f_{tt} + (1 - \mu)^{-3}(f_x \mu_x + f_y \mu_y) & =a (1-\mu)^{-2} \\
	\end{align*}

\begin{prop}
	If for a given $\mu(x,y)$, there exists $f$ satisfying the system of PDE's given above, with $\nabla f(0) \ne 0$, then $\mu$ must be in one of the following forms:
	\begin{itemize}
		\item[1.] $\displaystyle \mu(x,y) = 1 + \frac{c_1}{1-c_3x -c_4y -c_2(x^2+y^2)}$ \\
		 \item[2.] $\displaystyle \mu(x,y) =\frac{ax +G \left(\frac{c+ay}{a(b+ax) } \right)}{ax+b}$  \\
		 \item[3.] $\displaystyle \mu(x,y) =\frac{ay + G\left( \frac{x}{c+ay}\right)}{c+ay}$  \\
		\item[4.] $\displaystyle \mu(x,y) = G\left(\frac{by-cx}{b} \right)$\\
 		 \item[5.] $\displaystyle \mu(x,y) =\mu(x)$\\
		 \item[6.] $\displaystyle \mu(x,y) =\mu(y)$ \\
\end{itemize}
\end{prop}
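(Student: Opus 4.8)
The plan is to first extract the shape of $f$ from the three ``spatial'' equations and then analyze the remaining three, always exploiting the tension between $\mu$ being independent of $t$ while $a,b,d,k$ need not be. Since $f_{xy}=0$, write $f(t,x,y)=P(t,x)+Q(t,y)$; then $f_{xx}=f_{yy}=a$ forces $P_{xx}(t,x)=Q_{yy}(t,y)$, and as the two sides depend on disjoint spatial variables each must be a function of $t$ alone. Hence $a=a(t)$ and
\[
f(t,x,y)=\tfrac12 a(t)(x^2+y^2)+b(t)x+d(t)y+k(t).
\]

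The key observation is that the two mixed equations say precisely that $\partial_x\big[(1-\mu)f_t\big]=0$ and $\partial_y\big[(1-\mu)f_t\big]=0$, so $(1-\mu)f_t$ is a function of $t$ only, say $(1-\mu)f_t=C(t)$. This yields a clean dichotomy according to whether $f_t\equiv 0$. If $f_t\not\equiv 0$ then $C\not\equiv 0$ and $\tfrac{1}{1-\mu}=f_t/C(t)$; since the left-hand side is $t$-independent, the coefficients $\tfrac12 a'(t),b'(t),d'(t),k'(t)$ of $f_t$ must all be proportional to $C(t)$, so $\tfrac{1}{1-\mu}$ equals a fixed quadratic of the special shape $\gamma_2(x^2+y^2)+\gamma_1 x+\gamma_0 y+\gamma_3$ (equal $x^2,y^2$ coefficients, no $xy$ term). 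Solving for $\mu$ gives exactly form (1). In this branch the trace equation serves only to determine the $t$-profile $a(t)$ and imposes no further constraint on $\mu$, so I need not fully resolve it here.

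The substantive branch is $f_t\equiv 0$, where $a,b,d,k$ are constants, the mixed equations become vacuous, and the trace equation is the \emph{only} remaining constraint; with $f_{tt}=0$ it reduces to the first-order linear PDE
\[
(ax+b)\,\mu_x+(ay+d)\,\mu_y=a(1-\mu).
\]
I would integrate this by characteristics. When $a\neq 0$, the translation $x\mapsto x+b/a,\ y\mapsto y+d/a$ turns $1-\mu$ into a function homogeneous of degree $-1$, i.e. $1-\mu=u^{-1}H(v/u)$ in the shifted coordinates; rewriting this homogeneous solution with the two natural choices of quotient variable recovers forms (2) and (3). When $a=0$ the equation becomes $b\mu_x+d\mu_y=0$, whose solutions are functions of a single linear combination of $x$ and $y$: this is form (4) in general, and in the degenerate sub-cases $b=0$ or $d=0$ it collapses to $\mu=\mu(x)$ (form (5)) or $\mu=\mu(y)$ (form (6)).

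I expect the main obstacle to be the $f_t\equiv 0$ branch: integrating the transport PDE and then matching the homogeneous/linear characteristic solutions to the precise parametrizations listed in forms (2)--(6), while carefully bookkeeping the degeneracies $a=0$, $b=0$, $d=0$. The hypothesis $\nabla f(0)\neq 0$ enters exactly here, ruling out the trivial solution $f\equiv\text{const}$ (which would constrain $\mu$ not at all) and guaranteeing $(b,d)\neq 0$ in the $a=0$ case so that forms (4)--(6) are genuinely the only possibilities.
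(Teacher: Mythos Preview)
Your argument is correct and follows essentially the same route as the paper: extract the quadratic-in-$(x,y)$ form of $f$ from the first three equations, use the two mixed equations to conclude $(1-\mu)f_t$ depends only on $t$, split on whether $f_t\equiv 0$ to obtain form~(1) versus the transport PDE $(ax+b)\mu_x+(ay+d)\mu_y=a(1-\mu)$, and integrate the latter by characteristics to recover forms~(2)--(6). Your observation that the mixed equations read $\partial_x[(1-\mu)f_t]=\partial_y[(1-\mu)f_t]=0$ is a slightly cleaner packaging of the same computation the paper performs by integrating $h_y(1-\mu)=\mu_y h$ as an ODE, but the logic and case structure are identical.
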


\begin{proof}
The first three Hessian equations
\begin{align*}
f_{xx} &= ag(\partial_x, \partial_x) = a\\
f_{xy} & = ag(\partial_x, \partial_y) = 0\\
f_{yy} & = a
\end{align*}
collectively imply $a(t,x,y) = a(t)$ and $$f(t,x,y) = \frac{a(t)}{2}\left(x^2+y^2\right)+b(t)x + c(t)y + k(t)$$
If we write $h(t,x,y) = f_t(t,x,y)$, then we can rewrite and solve the fourth Hessian equation as a first order differential equation.
	\begin{align*}
		h_y(1-\mu) & = \mu_y h\\
		h_y & = \mu_y h + \mu h_y = \frac{\partial}{\partial y}(\mu h) \Rightarrow \\
		f_t(t,x,y) & = h(t,x,y) = C_1(t,x)/(\mu(x,y) - 1)\\ 
	\end{align*}
	An analgous argument works for the fifth equation, so 
	$$f_t(t,x,y) = \frac{C_1(t,x)}{\mu(x,y) - 1} = \frac{C_2(t,y)}{\mu(x,y) - 1} \Rightarrow f_t(t,x,y) = \frac{C_1(t)}{\mu(x,y) - 1}$$
We can solve for $C_1(t)$ by using the fact that $f_t(t,x,y)$ is a polynomial and evaluating at $(t,0,0)$.
	\begin{align*}
		\frac{C_1(t)}{\mu(x,y) - 1} & = \frac{a'(t)}{2}(x^2+y^2) + b'(t)x + c'(t) y + k'(t) \Rightarrow \\
		\frac{C_1(t)}{\mu(0,0) - 1} & = k'(t) \Rightarrow \\
		C_1(t) &= (\mu(0,0) - 1)k'(t)\\
	\end{align*}	
	Letting $c_1 = \mu(0,0)-1$, we have
	$$f_t(t,x,y) = \frac{c_1k'(t)}{\mu -1} \Rightarrow f(t,x,y) = \frac{c_1k(t)}{\mu - 1} + K(x,y)$$

Equating to the polynomial expression for $f$ gives us,
	\begin{align*}
		\frac{c_1k(t)}{\mu - 1} + K(x,y) & = \frac{a(t)}{2}(x^2+y^2) + b(t)x + c(t)y + k(t) \Rightarrow \\
		K(x,y) & = \frac{a(t)}{2}(x^2+y^2) + b(t)x + c(t)y + H(x,y)k(t)\\
	\end{align*}
where $\displaystyle H(x,y) =1 - \frac{ c_1}{\mu -1}$.  Take a $t$ derivative of each side to get a linear equation of the functions $x^2+y^2$, $x$, $y$ and $H(x,y)$.
	$$\frac{a'(t)}{2}(x^2+y^2) + b'(t)x + c'(t)y + H(x,y)k'(t) = 0$$
	This equation implies either $a'(t) = b'(t) = c'(t) = k'(t) = 0$ or $$H(x,y) = c_2(x^2+y^2) + c_3 x + c_4 y$$
We can now solve for $\mu$ since,
	\begin{align*}
		H & = 1 - \frac{c_1}{\mu -1} \Rightarrow \\
		\mu(x,y) & = 1 + \frac{c_1}{1-H}\\
			& = 1 + \frac{c_1}{1-c_3x -c_4y -c_2(x^2+y^2)} 	\end{align*}
	A continued analysis would investigate the implications of the final Hessian equation.  However, the equation for $\mu$ just derived will be sufficient for our purposes.\\ 
	
	We now investigate solutions to the system when $a$, $b$, $c$ and $k$ are all constant.  This simplifies the system considerably; the third and fourth equations automatically hold and the final equation simplifies to
	$$(ax+b)\mu_x + (ay +c)\mu_y = a(1-\mu)$$
	
	This PDE is straight forward to solve using the Method of Characteristics.  
	\begin{align*}
		\mu(x,y) & = \frac{ax +G \left(\frac{c+ay}{a(b+ax) } \right)}{ax+b} \textrm{  if $a \ne0$ and $b\ne0$}\\
		\mu(x,y) & = \frac{ay + G\left( \frac{x}{c+ay}\right)}{c+ay}  \textrm{   if $a \ne 0$ and $b = 0$}\\
		\mu(x,y) & = G\left(\frac{by-cx}{b} \right) \textrm{   if $a = 0$ and $b\ne0$}\\
		\mu(x,y) & = \mu(x) \textrm{   if $a = 0$, $b=0$ and $c \ne 0$}\\
		\mu(x,y) & = \mu(y) \textrm{  if $a = 0 $, $b \ne 0$ and $c = 0$}\\
	\end{align*}

\end{proof}

By selecting any $\mu$ \emph{not} in one of the above forms, we can generate an example of a Codazzi tensor on a compact manifold where the metric is not a warped product at at least one point.

\begin{cor}
	Let $\mu(x,y) = \displaystyle \frac{1}{2} \sin x \cos y$.  Then the metric $g$ and Codazzi tensor $A$ defined above are periodic and pass to a metric $\bar{g}$ and Codazzi tensor $\bar{A}$ on $S^1 \times S^1 \times S^1$.  $\bar{g}$ is not a warped product on a neighborhood of the point $\left[(0,0,0)\right]$.
\end{cor}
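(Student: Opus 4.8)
The plan is to reduce the assertion to the preceding classification Proposition and to Brinkmann's Lemma, and then to check by hand that $\mu(x,y)=\tfrac12\sin x\cos y$ falls into none of the six admissible forms. First I would handle the descent to the torus. Since $|\mu|\le\tfrac12<1=\lambda$ everywhere, the metric $g=(1-\mu)^{-2}\,dt^2+dx^2+dy^2$ and the tensor $A$ are smooth on all of $\mathbb{R}^3$, and $A$ has exactly two distinct eigenfunctions $\mu$ and $1$ with $\dim V_\mu=1$. Every coefficient of $g$ and every component of $A$ depends only on $\mu(x,y)$, which is $2\pi$-periodic in $x$ and $y$, while nothing depends on $t$; hence $g$ and $A$ are invariant under the lattice $(2\pi\mathbb{Z})^3$ and descend to $\bar g$ and $\bar A$ on $T^3=S^1\times S^1\times S^1$. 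Because the quotient map is a local isometry, $\bar A$ inherits the Codazzi property from $A$ (which is Codazzi by the earlier proposition), and a neighborhood of $[(0,0,0)]$ is isometric to a neighborhood of $(0,0,0)$ in $(\mathbb{R}^3,g)$. As the interval-warped-product property is local and isometry-invariant, it suffices to show $(\mathbb{R}^3,g)$ is not such a warped product near the origin.

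For the core claim I would argue by contraposition. If $(\mathbb{R}^3,g)$ were a warped product with interval base near $0$, then Brinkmann's Lemma would furnish $f,a$ with $\mathrm{Hess}\,f=a\,g$ and $\nabla f(0)\ne0$, which is exactly the system solved in the preceding Proposition; its conclusion forces $\mu$ into one of the forms (1)--(6). So the whole task becomes showing $\tfrac12\sin x\cos y$ is none of these. Form (1) is a rational function of $(x,y)$: restricting to $y=0$ gives $\tfrac12\sin x$, which is not rational, since a non-constant rational function has finitely many zeros whereas $\tfrac12\sin x$ has infinitely many, and by the identity theorem agreement on a neighborhood would force agreement on the common domain. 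Thus (1) is excluded.

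Forms (2)--(6) are precisely the solutions of the linear transport equation $(ax+b)\mu_x+(ay+c)\mu_y=a(1-\mu)$ for the constant data attached to each case, and in every one of those cases at least one of $a,b,c$ is nonzero. Hence it is enough to prove that $\mu=\tfrac12\sin x\cos y$ satisfies this PDE only for $a=b=c=0$. I would substitute $\mu_x=\tfrac12\cos x\cos y$ and $\mu_y=-\tfrac12\sin x\sin y$, set $E:=(ax+b)\mu_x+(ay+c)\mu_y-a(1-\mu)$, and expand near the origin: evaluating $E(0,0)$ gives $\tfrac{b}{2}-a=0$, differentiating once in $x$ and evaluating at the origin gives $a=0$ (hence $b=0$), after which $E$ collapses to $-\tfrac{c}{2}\sin x\sin y$, which vanishes on an open set only if $c=0$. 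So no admissible $(a,b,c)$ exists, $\mu$ is not of forms (2)--(6), and together with the exclusion of (1) the classification Proposition shows no $f$ with $\mathrm{Hess}\,f=a\,g$ and $\nabla f(0)\ne0$ can exist; by Brinkmann's Lemma, $\bar g$ is not a warped product near $[(0,0,0)]$.

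The main obstacle is the uniform treatment of the families (2)--(6): they contain an arbitrary function $G$, so they cannot be dismissed by the ``not rational'' argument that disposes of (1), and inverting each characteristic formula separately would be awkward. The observation that unlocks the argument is that all of (2)--(6) are exactly the integrals of the single equation $(ax+b)\mu_x+(ay+c)\mu_y=a(1-\mu)$, which reduces the entire question to the short, elementary Taylor-coefficient computation above.
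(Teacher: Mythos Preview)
Your proposal is correct and follows the paper's approach: reduce via Brinkmann's lemma to the classification Proposition, then check that $\mu=\tfrac12\sin x\cos y$ lies in none of the six admissible forms. The paper's proof is a single sentence asserting this exclusion is evident, whereas you supply a full verification; your unified treatment of forms (2)--(6) via the underlying transport equation $(ax+b)\mu_x+(ay+c)\mu_y=a(1-\mu)$ is a tidy alternative to inspecting each characteristic formula separately.
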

This result follows from the fact that $\mu(x,y)$ is clearly not in any of the functional forms listed in the proposition.\\

For the proof of Proposition \ref{prop:inconsistent} we use the same template but with $\mu = 1 + (y/x^2)$.  As before $\lambda = 1$.

\begin{proof}[Proof of Proposition \ref{prop:inconsistent}]
Let $M = \{(t,x,y) \in \mathbf{R}^3: y \ne 0\}$.  Define a metric and tensor on $M$ by 
$$\displaystyle g = \frac{x^4}{y^2}dt^2 + dx^2 + dy^2.$$

\begin{align*}
	A \partial_t & = \left(1 + \frac{y}{x^2}\right) \partial_t\\
	A \partial_x & = \partial_x\\
	A \partial_y & = \partial_y
\end{align*}

  It's clear that the metric cannot be written in the form $$g = dt^2 + F(t)\left(dx^2 + dy^2\right)$$ so either the metric is a warped product with inconsistent warping and eigenspace structures, or the metric is not warped at all.    If we let $x = r \cos \theta$ and $y = r \sin \theta$, then 

$$g = dr^2 + r^2\left(\frac{\cos^4 \theta}{\sin^2 \theta}dt^2 + d\theta^2 \right)$$
 This shows the metric is warped in the $r$ direction.
\end{proof}

\end{document}